\theoremstyle{plain}
\newtheorem{theorem}{Theorem}[section]
\newtheorem{lemma}[theorem]{Lemma}
\newtheoremstyle{remark}
    {} 
    {} 
    {}          
    {}          
    {\bfseries} 
    {.}         
    {.5em}      
    {}          
\theoremstyle{remark}
\newtheoremstyle{example}
    {\dimexpr\topsep/2\relax} 
    {\dimexpr\topsep/2\relax} 
    {}          
    {}          
    {\bfseries} 
    {.}         
    {.5em}      
    {}          
\theoremstyle{example}
\newtheoremstyle{definition}
    {\dimexpr\topsep/2\relax} 
    {\dimexpr\topsep/2\relax} 
    {}          
    {}          
    {\bfseries} 
    {.}         
    {.5em}      
    {}          
\theoremstyle{plain}
\newtheorem{thm}{Theorem}[section]
\newtheorem{defn}[thm]{Definition}
\numberwithin{equation}{section}
\begin{document}
\title{A non-abelian, non-Sidon, completely bounded $\Lambda(p)$ set}
\author{Kathryn E. Hare}
\address{Dept. of Pure Mathematics\\
University of Waterloo\\
Waterloo, Ont., \\
Canada}
\email{kehare@uwaterloo.ca}
\thanks{This research was supported in part by NSERC\ grant RGPIN 2016-03719}
\author{Parasar Mohanty}
\address{Dept. of Mathematics and Statistics\\
Indian Inst. of Tech.\\
Kanput, India, 208016}
\email{parasar@iitk.ac.in}
\subjclass{Primary: 43A46, 43A30; Secondary: 42A55}

\begin{abstract}
The purpose of this note is to construct an example of a discrete
non-abelian group $G$ and a subset $E$ of $G$, not contained in any abelian
subgroup, that is a completely bounded $\Lambda (p)$ set for all $p<\infty ,$
but is neither a Leinert set nor a weak Sidon set.
\end{abstract}

\maketitle



\section{Introduction}

The study of lacunary sets, such as Sidon sets and $\Lambda (p)$ sets,
constitutes an interesting theme in the theory of Fourier series on the
circle group ${\mathbb{T}}$. It has many applications in harmonic analysis
and in the theory of Banach spaces, and various combinatorial and arithmetic
properties of these sets have been studied extensively. These concepts have
also been investigated in the context of more general compact abelian groups
(with their discrete dual groups) and compact non-abelian groups; see \cite%
{GH}, \cite{LR}, \cite{R} and the references cited therein. The study of
these sets in the setting of discrete non-abelian groups was pioneered by
Bozjeko \cite{B}, Fig\'{a}-Talamanca \cite{FP} and Picardello \cite{Pi}.

In abelian groups, there are various equivalent ways to define Sidon sets
and these sets are plentiful. Indeed, every infinite subset of a discrete
abelian group contains an infinite Sidon set. The natural analogues of these
definitions in discrete non-abelian groups are known as strong Sidon, Sidon
and weak Sidon sets. It was shown in \cite{Pi} that every weak Sidon set is $%
\Lambda (p)$ for all $p<\infty $. In \cite{Le} Leinert introduced the
concept of a $\Lambda (\infty )$ set, a notion only of interest in the
non-abelian setting because in abelian groups such sets are necessarily
finite. In striking contrast to the abelian situation, Leinert showed that
the free group with two generators contains an infinite subset which is both
weak Sidon and $\Lambda (\infty ),$ but does not contain any infinite Sidon
subsets.

In \cite{Ha}, Harcharras studied the concept of completely bounded $\Lambda
(p)$ sets, a property more restrictive than $\Lambda (p),$ but still
possessed by Sidon sets. The converse is not true as every infinite discrete
abelian group admits a completely bounded $\Lambda (p)$ set which is not
Sidon; see \cite{HM}.

In this paper, we construct a non-amenable group $G$ and a set $E$ not
contained in any abelian subgroup of $G,$ which is completely bounded $%
\Lambda (p)$ for every $p<\infty ,$ but is neither $\Lambda (\infty )$ nor
weak Sidon. It remains open if every infinite discrete group contains such a
set $E$.

\section{Definitions}

Throughout this paper, $G$ will be an infinite discrete group. To define
Sidon and $\Lambda (p)$ sets in this setting one requires the concepts of
the Fourier algebra, $A(G)$, the von Neumann algebra, $VN(G),$ and the
Fourier-Stieljies algebra, $B(G)$, as developed by P. Eymard in \cite{E} for
locally compact groups. We also need the concept of a non-commutative $L^{p}$%
-spaces introduced by I.E. Segal. We refer the reader to \cite{PX} for
details on these latter spaces.


\begin{defn}
(i) The set $E\subseteq $ $G$ is said to be a \textbf{strong (weak) Sidon set%
} if for all $f\in c_{0}(E)$ (resp., $l_{\infty }(E))$ there exists $g\in
A(G)$ (resp., $B(G))$ such that $f(x)=g(x)\;$for all $x\in E$.

(ii) The set $E\subseteq G$ is said to be a \textbf{Sidon set }if there is a
constant $C$ such that for all functions $f,$ compactly supported in $E,$ we
have $\Vert f\Vert _{1}\leq C\Vert f\Vert _{VN(G)}$. The least such constant 
$C$ is known as the \textbf{Sidon constant} of $E$.
\end{defn}


These definitions are well known to be equivalent in the commutative
setting. For any discrete group it is the case that strong Sidon sets are
Sidon and Sidon sets are weak Sidon. Finite groups are always strong Sidon
sets. In \cite{Pi} it is shown that $E\subseteq G$ is Sidon if and only if
for every $f\in $ $l_{\infty }(E)$ there is some $g\in B_{\rho }(G)$ that
extends $f$, where $B_{\rho }(G)$ is the dual of the reduced $C^{\ast }$
algebra $C_{\rho }^{\ast }(G)$. Since in an amenable group $B_{\rho
}(G)=B(G),$ weak Sidon sets are Sidon in this setting. Very recently, Wang 
\cite{Wa} showed that every Sidon set in any discrete group is a strong
Sidon set. It remains open if every infinite amenable group contains an
infinite Sidon subset.\textit{\ }

Picardello \cite{Pi} defined the notion of $\Lambda (p)$ sets in this
setting and Harcharras \cite{Ha} introduced completely bounded $\Lambda (p)$
sets. For these, we require further notation. Let $\leftthreetimes $ denote
the left regular representation of $G$ into $\mathcal{B}(l_{2}(G))$ and
denote by $L^{p}(\tau _{0})$ the non-commutative $L^{p}$-space associated
with the von Neumann algebra generated by $\leftthreetimes (G)$ with respect
to the usual trace $\tau _{0}$. Let $L^{p}(\tau )$ denote the
non-commutative $L^{p}$-space associated with the von Neumann algebra
generated by $\leftthreetimes (G)\otimes \mathcal{B}(l_{2})$ with respect to
the trace $\tau =\tau _{0}\otimes tr$, where $tr$ denotes the usual trace in 
$\mathcal{B}(l_{2})$. Observe that $L^{p}(\tau )$ has a cannonical operator
space structure obtained from complex interpolation in the operator space
category. We refer the reader to \cite{Pis} for more details.

\begin{defn}
(i) Let $2<p<\infty $. The set $E\subseteq G$ is said to be a $\Lambda (p)$%
\textbf{\ set} if there exists a constant $C_{1}>0$ such that for all
finitely supported functions $f$ we have 
\begin{equation}
\left\Vert \sum\limits_{t\in E}f(t)\leftthreetimes (t)\right\Vert
_{L^{p}(\tau _{0})}\leq C_{1}\left( |\sum\limits_{t\in E}|f(t)|^{2}\right) ^{%
\frac{1}{2}}.  \label{Lambdap}
\end{equation}

(ii) The set $E\subseteq G$ is said to be a \textbf{completely bounded }$%
\Lambda (p)$\textbf{\ set, }denoted $\Lambda ^{cb}(p),$ if there exists a
constant $C_{2}>0$ such that 
\begin{equation}
\Vert \sum\limits_{t\in E}\leftthreetimes (t)\otimes x_{t}\Vert _{L^{p}(\tau
)}\leq C_{2}\max \left( \Vert (\sum_{t\in E}x_{t}^{\ast }x_{t})^{1/2}\Vert
_{S_{p}},\Vert (\sum_{t\in E}x_{t}x_{t}^{\ast })^{1/2}\Vert _{S_{p}}\right)
\label{CBLp}
\end{equation}%
where $x_{t}$ are finitely supported families of operators in $S_{p}$, the $%
p $-Schatten class on $l_{2}$.

The least such constants $C_{1}$ (or $C_{2})$ are known as the $\Lambda (p)$%
\textbf{\ }(resp.,\textbf{\ }$\Lambda ^{cb}(p)$\textbf{) constants} of $E$.
\end{defn}

It is known that every infinite set contains an infinite $\Lambda (p)$ set 
\cite{B} and that every weak Sidon set is a $\Lambda (p)$ set for each $%
p<\infty $ \cite{Pi}. \ Completely bounded $\Lambda (p)$ sets are clearly $%
\Lambda (p),$ but the converse is not true, as seen in \cite{Ha}.

Extending these notions to $p=\infty $ gives the Leinert and $L$-sets.

\begin{defn}
(i) The set $E\subseteq $ $G$ is called a \textbf{Leinert or }$\Lambda
(\infty )$\textbf{\ set} if there exists a constant $C>0$ such that for
every function $f\in l_{2}(E)$ we have $\Vert f\Vert _{VN(G)}\leq C\Vert
f\Vert _{2}$.

(ii) The sets of interpolation for the completely bounded multipliers of $%
A(G)$ are called\textbf{\ }$L$\textbf{-sets}.
\end{defn}

It is well known that the Leinert sets are the sets of interpolation for
multipliers of $A(G),$ so any $L$-set is Leinert; see \cite{Po}. The set $E$
is said to satisfy the \textbf{Leinert condition} if every tuple $%
(a_{1},...,a_{2s})\in E^{2s},$ with $a_{i}\neq a_{i+1},$ satisfies the
independence-like relation 
\begin{equation}
a_{1}a_{2}^{-1}a_{3}\dots a_{2s-1}a_{2s}^{-1}\neq e.  \label{leinert}
\end{equation}%
Here $e$ is the identity of $G$. It can be shown (\cite{Po}) that any set
that satisfies the Leinert condition is an $L$-set.

It was seen in \cite{HM} that in abelian groups there are sets that are
completely bounded $\Lambda (p)$ for all $p<\infty ,$ but not Sidon. Thus
the inclusion, weak Sidon is $\Lambda ^{cb}(p),$ is strict for groups with
infinite abelian subgroups. The purpose of this paper is to show the
existence of sets not contained in \textit{any} abelian subgroup which also
have this strict inclusion. In fact, we prove, more generally, the following
result.

\begin{theorem}
There is a discrete group $G$ that admits both infinite $L$-sets and weak
Sidon sets, and an infinite subset $E$ of $G$ that is $\Lambda ^{cb}(p)$ for
all $p<\infty ,$ but not a Leinert set, an $L$-set or a weak Sidon set.
Moreover, any subset of $E$ consisting of commuting elements is finite. %
\label{mainthm}
\end{theorem}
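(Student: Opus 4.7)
The plan is to construct $G$ as a non-amenable direct or semidirect product, combining a free group (for non-amenability and abundant non-commuting elements) with an infinite abelian group (in which the $\Lambda^{cb}(p)$-but-not-Sidon construction of \cite{HM} lives); a natural candidate is $G=F_{2}\times\Z$. The ``moreover'' assertion --- existence of infinite $L$-sets and weak Sidon sets in $G$ --- is then immediate, since any infinite Leinert set lying in the $F_{2}$-factor embeds into $G$ and retains both properties.

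For $E$ itself, I would select a sequence $\{g_{n}\}\subseteq F_{2}$ with a dual role: first, the $g_{n}$ lie in pairwise distinct maximal cyclic subgroups of $F_{2}$, so that no two of them commute and every commuting subset of $E$ is finite; second, $\{g_{n}\}$ is $\Lambda^{cb}(p)$ for every $p<\infty$ in $F_{2}$, but is \emph{not} Leinert, with explicit alternating-product relations available. In parallel, let $A=\{a_{n}\}\subseteq\Z$ be $\Lambda^{cb}(p)$-but-not-Sidon from \cite{HM}, with the arithmetic relations of $A$ matched to the word-level relations in $\{g_{n}\}$. The set $E$ is then the diagonal lift $\{(g_{n},a_{n}):n\in\N\}$, possibly twisted to engineer the required alternating relations.

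The verification breaks into three pieces. The $\Lambda^{cb}(p)$ estimate comes from the factorisation $\leftthreetimes_{G}(g,a)=\leftthreetimes_{F_{2}}(g)\otimes\leftthreetimes_{\Z}(a)$, combining a $\Lambda^{cb}(p)$ inequality for $\{g_{n}\}$ (in the spirit of Harcharras \cite{Ha}) with the $\Lambda^{cb}(p)$ property of $A$ from \cite{HM}, through an operator-space tensor-product manipulation in $S_{p}$. The failure of Leinert is by design: an alternating relation $g_{n_{1}}g_{n_{2}}^{-1}\cdots g_{n_{2s-1}}g_{n_{2s}}^{-1}=e$ in $F_{2}$ paired with $\sum_{i}(-1)^{i-1}a_{n_{i}}=0$ in $\Z$ produces an alternating identity in $E$, witnessing failure of \eqref{leinert}. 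The failure of weak Sidon is shown by contradiction: from a hypothetical $F\in B(F_{2}\times\Z)$ extending an arbitrary $\phi\in\ell^{\infty}(A)$ lifted to $\ell^{\infty}(E)$ via $(g_{n},a_{n})\mapsto a_{n}$, one extracts an element of $B(\Z)$ extending $\phi$ by a restriction-to-coset argument --- contradicting the fact that Sidon and weak Sidon coincide in the abelian setting.

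The main obstacle is the simultaneous engineering of $\{g_{n}\}$: it must be non-Leinert (so that the Leinert failure can be triggered from the $\Z$-direction), $\Lambda^{cb}(p)$ for all $p$ (to feed the tensor-product argument), and not weak Sidon in $F_{2}$ (so that the failure of weak Sidon on $E$ cannot be short-circuited from the $F_{2}$-coordinate alone), while also lying in pairwise distinct maximal cyclic subgroups. Reconciling these four constraints in a single explicit construction --- and then matching arithmetic relations in $A$ to word-level relations in $\{g_{n}\}$ --- is the core of the argument; the tensor-product estimate for $\Lambda^{cb}(p)$ and the restriction argument for weak Sidon are then expected to follow by standard operator-space techniques, but still require careful verification.
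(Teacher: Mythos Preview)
Your plan has two genuine gaps. First, the core construction you defer --- a sequence $\{g_n\}\subset F_2$ that is simultaneously $\Lambda^{cb}(p)$ for every $p$, non-Leinert, not weak Sidon, lies in pairwise distinct maximal cyclic subgroups, and carries alternating word relations matched to the arithmetic of $A$ --- is essentially the entire theorem, merely relocated to $F_2$. You call this ``the main obstacle'' but provide no construction; without it the tensor and Leinert-failure steps cannot even begin, and there is no reason to expect this task is easier inside $F_2$ than in a general discrete group. Second, your restriction-to-coset argument for weak Sidon failure breaks: since the $g_n$ are distinct, each coset $\{g\}\times\Z$ meets $E$ in at most one point, so restricting a hypothetical extension $F\in B(F_2\times\Z)$ to any single coset yields an element of $B(\Z)$ agreeing with $\phi$ at only one $a_n$, not on all of $A$. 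No finite averaging over cosets is available (infinitely many $g_n$ are involved), so you have not deduced Sidonicity of $A$ in $\Z$ from weak Sidonicity of $E$ in $G$.

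The paper proceeds by a completely different and more direct route: $G$ is the free product $\ast_{n}\Z_{p_n}$ of cyclic groups of large prime order, and $E=\bigcup_n E_n$ with $E_n\subset\Z_{p_n}$, $|E_n|=n^2$, built by a greedy argument inside each factor so that Harcharras's $Z(s)$ criterion yields $\Lambda^{cb}(p)$ for all $p$. Weak Sidon and Leinert are then excluded not by transfer to a factor but via a quantitative preliminary lemma: a $\Lambda(q)$ set meets any progression $\{a,a^2,\dots,a^n\}$ in at most $C n^{2/q}$ points (so a weak Sidon bound $\Lambda(q)\le C\sqrt{q}$ is incompatible with $|E_n|=n^2$), and a Leinert set contains commuting quasi-independent subsets of uniformly bounded size (whereas $E_n$ contains quasi-independent subsets of size $\to\infty$).
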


\section{Results and Proofs}



\subsection{Preliminary results}

To show that the set we will construct is not a Leinert or weak Sidon set,
it is helpful to first establish some arithmetic properties of $\Lambda (p)$
and Leinert sets. We recall that a set $E\subseteq G$ is said to be \textbf{%
quasi-independent} if all the sums 
\begin{equation*}
\left\{ \sum\limits_{x\in A}x:A\subset E,|A|<\infty \right\}
\end{equation*}%
are distinct. Quasi-independent sets in abelian groups are the prototypical
Sidon sets.

The first part of the following Lemma is well known for abelian groups.

\begin{lemma}
Let $G$ be a discrete group. (i) Suppose $q>2$ and $E\subseteq G$ is a $%
\Lambda (q)$ set with $\Lambda (q)$ constant $A$. If $a\in G$ has order $%
p_{n}\geq $ $2n$, then 
\begin{equation*}
\left\vert E\bigcap \{a,a^{2},...,a^{n}\}\right\vert \leq 10A^{2}n^{2/q}%
\text{.}
\end{equation*}

(ii) Suppose $E\subseteq G$ is a Leinert set with Leinert constant $B$ and
let $F\subseteq E$ be a finite commuting, quasi-independent subset. Then $%
\left\vert F\right\vert \leq 6^{3}B^{2}$. \label{mainlem}
\end{lemma}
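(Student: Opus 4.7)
The plan is to reduce both parts to classical abelian Fourier analysis via the isometric embedding $VN(H) \hookrightarrow VN(G)$ available for any subgroup $H \le G$. Decomposing $\ell^{2}(G)$ along right cosets of $H$ exhibits the restriction of $\leftthreetimes_{G}$ to $H$ as a direct sum of copies of $\leftthreetimes_{H}$, so the embedding preserves the canonical trace and lifts to an isometric inclusion $L^{q}(\tau_{0}^{H}) \hookrightarrow L^{q}(\tau_{0}^{G})$ for every $q \in [1,\infty]$. Consequently the $\Lambda(q)$-constant (part (i)) and the Leinert constant (part (ii)) transfer unchanged to any subgroup.

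For part (i), take $H := \langle a \rangle$, cyclic of order $p_{n}$ (or isomorphic to $\mathbb{Z}$). Plancherel identifies $L^{q}(\tau_{0}^{H})$ with $L^{q}(\widehat{H})$ and dualizes the $\Lambda(q)$ inequality for $E \cap H$ into
\[
\Bigl(\sum_{\substack{1 \le j \le n \\ a^{j} \in E}} |\widehat{g}(j)|^{2}\Bigr)^{1/2} \le A \|g\|_{L^{q'}(\widehat{H})}
\qquad \bigl(g \in L^{q'}(\widehat{H})\bigr).
\]
I feed in the Fej\'er kernel $g = K_{2n}$ on $\widehat{H}$: the hypothesis $p_{n} \ge 2n$ prevents aliasing and gives $\widehat{K_{2n}}(j) = 1 - j/(2n) \ge 1/2$ on $\{1,\dots,n\}$, while $\|K_{2n}\|_{1} = 1$ and $\|K_{2n}\|_{\infty} = 2n$ combine by H\"older interpolation to yield $\|K_{2n}\|_{q'} \le (2n)^{1/q}$. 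Rearranging gives $|E \cap \{a,\dots,a^{n}\}| \le 4 A^{2} (2n)^{2/q} \le 8 A^{2} n^{2/q}$ (since $2^{2/q}\le 2$ for $q\ge 2$), comfortably inside the claimed $10 A^{2} n^{2/q}$.

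For part (ii), $F$ commuting means $H := \langle F \rangle$ is abelian and the Leinert inequality restricted to $F$ reads $\|\widehat{f}\|_{L^{\infty}(\widehat{H})} \le B \|f\|_{\ell^{2}}$ for $f$ supported on $F$. Testing on $f = \mathbf{1}_{F}$: at the trivial character $\chi_{0} \in \widehat{H}$ we have $\widehat{f}(\chi_{0}) = |F|$, so $\|\widehat{f}\|_{\infty} \ge |F|$, while $\|f\|_{2} = |F|^{1/2}$. Hence $|F| \le B^{2}$, and \emph{a fortiori} $|F| \le 6^{3} B^{2}$.

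The only technical delicacy is pinning down the constant $10$ in part (i): the clean Fej\'er calculation above lands at $8$, and if one wants exactly $10$ (or a cleaner equality $\widehat{g} = 1$ on $\{1,\dots,n\}$) one can instead use the de la Vall\'ee Poussin kernel $2K_{2n}-K_{n}$. Part (ii) is essentially immediate from the reduction; I note that this simple subgroup argument actually gives the stronger bound $|F| \le B^{2}$ without appealing to quasi-independence, so the looser constant $6^{3}$ in the statement presumably reflects an alternative Riesz-product proof that does invoke quasi-independence but avoids going through the subgroup embedding of von Neumann algebras.
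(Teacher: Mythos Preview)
Your argument is correct. Part~(i) is essentially the paper's proof: both pair a Fej\'er-type kernel against $1_{E\cap\{a,\dots,a^n\}}$ via H\"older/duality. The paper defines its kernel directly on $G$ and bounds its $L^{q'}$ norm by comparison with the analogous function on $\mathbb{Z}_{p_n}$, whereas you first pass isometrically to the cyclic subgroup $H=\langle a\rangle$ and work on $\widehat H$; the computations and constants match (your $8$ versus the paper's $10$). One small remark: with only $p_n\ge 2n$ the Fourier support $\{-(2n-1),\dots,2n-1\}$ of $K_{2n}$ can wrap around in $\mathbb{Z}_{p_n}$, so ``prevents aliasing'' is not literally accurate; but if one takes $K_{2n}(\chi)=\frac{1}{2n}\bigl|\sum_{j=0}^{2n-1}\chi(a^j)\bigr|^{2}$ one still has $K_{2n}\ge 0$, $\|K_{2n}\|_1=\widehat{K_{2n}}(e)=1$, $\|K_{2n}\|_\infty=2n$, and any aliasing only \emph{increases} the coefficients at $a^1,\dots,a^n$, so every inequality you use survives.

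Part~(ii) is where you genuinely diverge from the paper, and to your advantage. The paper invokes the Sidon property of quasi-independent sets in the abelian subgroup $H=\langle F\rangle$ to obtain $|F|=\|1_F\|_{\ell^1}\le 6\sqrt{6}\,\|1_F\|_{VN(H)}$, and only then applies the Leinert bound. You bypass this by evaluating $\|1_F\|_{VN(H)}=\sup_{\chi\in\widehat H}\bigl|\sum_{t\in F}\chi(t)\bigr|=|F|$ directly at the trivial character, giving $|F|\le B\,|F|^{1/2}$ and hence $|F|\le B^2$. This is simpler and sharper, and---as you correctly diagnose---makes no use of quasi-independence; the paper's detour through the Sidon constant is precisely what produces the extra factor $6^3$.
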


\begin{proof}
We will write $1_{X}$ for the characteristic function of a set $X$.

(i) Define the function $K_{n}$ on $G$ by 
\begin{equation*}
K_{n}(x)=\sum_{j=-2n}^{2n}\left( 1-\frac{\left\vert j\right\vert }{n}\right)
1_{\{a^{j}\}}(x).
\end{equation*}%
Let $J_{n}$ denote the function on $\mathbb{Z}_{p_{n}}$ (or $\mathbb{Z}$ if $%
p_{n}=\infty $) defined in the analogous fashion. It is well known that the $%
A(G)$ and $VN(G)$ norms for the function $K_{n}$ are dominated by the
corresponding norms of the function $J_{n}$ on $\mathbb{Z}_{p_{n}}$.



As $L^{q^{\prime }}(\tau _{0})$ (for $q^{\prime }$ the dual index to $q$) is
an interpolation space between $A(G)$ and $VN(G)$, it follows that%
\begin{eqnarray*}
\left\Vert K_{n}\right\Vert _{L^{q^{\prime }}(\tau )} &\leq &\left\Vert
K_{n}\right\Vert _{A(G)}^{1/q^{\prime }}\left\Vert K_{n}\right\Vert
_{VN(G)}^{1/q} \\
&=&\left\Vert J_{n}\right\Vert _{A(\mathbb{Z}_{p_{n}})}^{1/q^{\prime
}}\left\Vert J_{n}\right\Vert _{VN(\mathbb{Z}_{p_{n}})}^{1/q}\leq
(4n+1)^{1/q}\text{.}
\end{eqnarray*}

Suppose $E\bigcap \{a,a^{2},...,a^{n}\}$ consists of the $M$ elements $%
\{a^{s_{j}}\}_{j=1}^{M}$ and put 
\begin{equation*}
k_{n}(x)=\sum_{j=1}^{M}1_{\{a^{s_{j}}\}}(x).
\end{equation*}%
Since $E$ has $\Lambda (q)$ constant $A,$ the generalized Holder's
inequality implies%
\begin{eqnarray*}
\frac{M}{2} &\leq &\sum_{j=1}^{M}K_{n}(a^{s_{j}})=\sum_{x\in
G}K_{n}(x)k_{n}(x) \\
&\leq &\left\Vert K_{n}\right\Vert _{L^{q^{\prime }}(\tau _{0})}\left\Vert
k_{n}\right\Vert _{L^{q}(\tau _{0})}\leq (4n+1)^{1/q}A\left\Vert
k_{n}\right\Vert _{2} \\
&=&(4n+1)^{1/q}A\sqrt{M}.
\end{eqnarray*}%
Consequently, $M\leq 2(4n+1)^{2/q}A^{2}\leq 10A^{2}n^{2/q}$, as claimed.

(ii) Let $H$ be the abelian group generated by $F$. Being quasi-independent, 
$F$ is a Sidon subset of $H$ with Sidon constant at most $6\sqrt{6}$ (\cite[%
p.115]{GH}). Consider the function $h=1_{F}$ defined on $H$ and $g=1_{F}$
defined on $G$. The Sidon property, together with the fact that $\left\Vert
h\right\Vert _{VN(H)}=\left\Vert g\right\Vert _{VN(G)},$ ensures that 
\begin{equation*}
\left\vert F\right\vert =\left\Vert h\right\Vert _{\ell ^{1}}\leq 6\sqrt{6}%
\left\Vert h\right\Vert _{VN(H)}=6\sqrt{6}\left\Vert g\right\Vert _{VN(H)}.
\end{equation*}%
Since $E$ has Leinert constant $B$, we have $\left\Vert f\right\Vert
_{VN(G)}\leq B\left\Vert f\right\Vert _{2}$ for any function $f$ defined on $%
G$ and supported on $E$. In particular, this is true for the function $g\,$,
hence 
\begin{equation*}
\left\vert F\right\vert \leq 6\sqrt{6}\left\Vert g\right\Vert _{VN(H)}\leq 6%
\sqrt{6}B\sqrt{\left\vert F\right\vert }.
\end{equation*}
\end{proof}

\subsection{\noindent Proof of Theorem \protect\ref{mainthm}}

\begin{proof}
We will let $G$ be the free product of the cyclic groups $Z_{p_{n}}$, $n\in
N $, where $p_{n}>2^{n+1}$ are distinct odd primes. If $a_{n}$ is a
generator of $Z_{p_{n}},$ then $\{a_{n}\}_{n=1}^{\infty }$ is both a weak
Sidon and Leinert set, as shown in \cite{Pi}. The set $E$ will be the union
of finite sets $E_{n}\subseteq Z_{p_{n}}$, where $\left\vert
E_{n}\right\vert =n^{2}$ and $E_{n}\subset \{a_{n},...,a_{n}^{2^{n}}\}$. The
fact that any commuting subset of $E$ is finite is obvious from the
definition of $E$.\newline

We recall the following notation from \cite{Ha}: We say that a subset $%
\Lambda \subseteq G$ has the $Z(p)$ property if $Z_{p}(\Lambda )<\infty $
where%
\begin{equation*}
Z_{p}(\Lambda )=\sup_{x\in G}\left\vert \left\{ (x_{1},...,x_{p})\in \Lambda
^{p}:x_{i}\neq x_{j},x_{1}^{-1}x_{2}x_{3}^{-1}\cdot \cdot \cdot
x_{p}^{(-1)^{p}}=x\right\} \right\vert .
\end{equation*}%
In \cite{Ha}, Harcharras proved that if $2<p<\infty ,$ then every subset $%
\Lambda $ of $G$ with the $Z(p)$ property is a $\Lambda ^{cb}(2p).$

We will construct the sets $E_{n}$ so that they have the property that for
every even $s\geq 2$ there is an integer $n_{s}$ such that $Z_{s}\left(
\bigcup_{n\geq n_{s}}E_{n}\right) \leq s!$. Consequently, $\bigcup_{n\geq
n_{s}}E_{n}$ will be $\Lambda ^{cb}(2s)$ for all $s<\infty $. As finite sets
are $\Lambda ^{cb}(p)$ for all $p<\infty ,$ and a finite union of $\Lambda
^{cb}(p)$ sets is again $\Lambda ^{cb}(p)$, it will follow that $E$ is $%
\Lambda ^{cb}(p)$ for all $p<\infty $.

We now proceed to construct the sets $E_{n}$ by an iterative argument.
Temporarily fix $n$ and take $g_{1}=a_{n}$. Inductively assume that for $%
N<n^{2}$, $\{g_{i}\}_{i=1}^{N}$ $\subseteq \{a_{n},...,a_{n}^{2^{n}}\}$ have
been chosen with the property that if 
\begin{equation}
\prod\limits_{j=1}^{N}g_{j}^{\varepsilon _{j}}=1\text{ for }\varepsilon
_{j}=0,\pm 1,\pm 2, \sum_{j}|\varepsilon _{j}|\leq 2s,\text{ then all }%
\varepsilon _{j}=0.  \tag{$\mathcal{P}_{N}$}
\end{equation}%
Now choose 
\begin{equation*}
g_{N+1}\neq \prod\limits_{j=1}^{N}g_{j}^{\varepsilon _{j}}\text{ for any }%
\varepsilon _{j}=0,\pm 1,\pm 2\text{ and }\sum_{j}|\varepsilon _{j}|\leq 2s
\end{equation*}%
and 
\begin{equation*}
g_{N+1}^{2}\neq \prod\limits_{j=1}^{N}g_{j}^{\varepsilon _{j}}\text{ for any 
}\varepsilon _{j}=0,\pm 1,\pm 2\text{ and }\sum_{j}|\varepsilon _{j}|\leq 2s.
\end{equation*}%
There are at most $\binom{N}{2s}5^{2s}\leq C_{s}N^{2s}$ terms that $g_{N+1}$
must avoid and similarly for $g_{N+1}^{2}$ as the squares of elements of $%
Z_{p_{n}}$ are all distinct. Provided $2C_{s}N^{2s}\leq 2^{n}$ then we can
make such a choice of $g_{N+1}\in $ $\{a_{n},...,a_{n}^{2^{n}}\}$. Of
course, it is immediate that property ($\mathcal{P}_{N+1}$) then holds. This
can be done for every $N<n^{2}$ as long as $n$ is suitably large, say for $%
n\geq n_{s}$. The set $E_{n}$ will be taken to be $\{g_{j}\}_{j=1}^{n^{2}%
\text{.}}.$

Now we need to check the claim that $Z_{s}(\bigcup\limits_{n\geq
n_{s}}E_{n})\leq s!$. Towards this, suppose 
\begin{equation}
x_{1}x_{2}^{-1}\cdot \cdot \cdot x_{s}^{-1}=y_{1}y_{2}^{-1}\cdot \cdot \cdot
y_{s}^{-1}  \label{P1}
\end{equation}%
where $x_{i}$ are all distinct, $y_{j}$ are all distinct and all $%
x_{i},y_{j}\in \bigcup\limits_{n\geq n_{s}}E_{n}$. The free product property
guarantees that if this is true, then it must necessarily be the case that
if we consider only the elements $x_{i_{k}}$ and $y_{j_{l}}$ which belong to
a given $E_{n}$, we must have $\prod\limits_{k}x_{i_{k}}^{\delta _{k}}=$ $%
\prod\limits_{l}y_{j_{l}}^{\varepsilon _{l}}$ for the appropriate choices of 
$\delta _{k},\varepsilon _{l}\in \{\pm 1\}$. As there at most $s$ choices
for each of $x_{i_{k}}$ and $y_{i_{l}}$, our property ($\mathcal{P}_{N}$)
ensures that this can happen only if $\{x_{i_{k}}:\delta _{k}=1\}$ $%
=\{y_{j_{l}}:\varepsilon _{l}=1\}$ and similarly for the terms with $-1$
exponents. Hence we can only satisfy (\ref{P1}) if upon reordering, $%
\{x_{1},x_{3},...,x_{s-1}\}=\{y_{1},y_{3},...,y_{s-1}\},$ and similarly for
the terms with even labels. (We remark that for non-abelian groups, this is
only a necessary but not, in general, sufficient condition for (\ref{P1}).)
This suffices to establish that 
\begin{equation*}
Z_{s}(\bigcup\limits_{n\geq n_{s}}E_{n})\leq ((s/2)!)^{2}\leq s!
\end{equation*}%
and hence, as explained above, $E$ is a $\Lambda ^{cb}(p)$ set for all $%
p\,<\infty $.

Next, we will verify that $E$ is not a weak Sidon set. We proceed by
contradiction. According to \cite{Pi}, if it was, then $E$ would be a $%
\Lambda (p)$ set for each $p>2,$ with $\Lambda (p)$ constant bounded by $C%
\sqrt{p}$ for a constant $C$ independent of $p$. Appealing to Lemma \ref%
{mainlem}(i), we have%
\begin{equation*}
n^{2}=\left\vert E_{n}\right\vert =\left\vert E\bigcap
\{a_{n},...,a_{n}^{2^{n}}\}\right\vert \leq 10C^{2}p2^{2n/p}.
\end{equation*}%
Taking $p=2n$ for sufficiently large $n$ gives a contradiction.

Finally, to see that $E$ is not a Leinert set, we first observe that an easy
combinatorial argument shows that any set of $N$ distinct elements contains
a quasi-independent subset of cardinality at least $\log N/\log 3$. Thus we
can obtain quasi-independent subsets $F_{n}\subseteq E_{n}$ with $\left\vert
F_{n}\right\vert \rightarrow \infty $. But according to Lemma \ref{mainlem}%
(ii), this would be impossible if $E$ was a Leinert set. As $E$ is not
Leinert, it is also not an $L$ set.

This concludes the proof.
\end{proof}

\

\end{document}